\documentclass[12pt]{amsart}
\usepackage{amsfonts}
\usepackage{ifthen}
\usepackage{verbatim}
\usepackage{amsthm}
\usepackage{amsmath}
\usepackage{amssymb}
\usepackage{graphicx}
\usepackage{subfigure}
\usepackage{amscd,amssymb,amsthm}
\usepackage{color,xcolor}

\newcounter{minutes}\setcounter{minutes}{\time}
\divide\time by 60
\newcounter{hours}\setcounter{hours}{\time}
\multiply\time by 60 \addtocounter{minutes}{-\time}

\setlength{\paperwidth}{200mm} \setlength{\paperheight}{270mm}
\setlength{\oddsidemargin}{0mm} \setlength{\evensidemargin}{0mm}
\setlength{\topmargin}{-20mm} \setlength{\headheight}{10mm}
\setlength{\headsep}{13mm} \setlength{\textwidth}{160mm}
\setlength{\textheight}{240mm} \setlength{\footskip}{15mm}
\setlength{\marginparwidth}{0mm} \setlength{\marginparsep}{0mm}

\title{On a convexity property }

\author{Slavko Simi\'c}

\address{ Mathematical Institute SANU, Kneza Mihaila 36, 11000
Belgrade, Serbia} \email{ ssimic@turing.mi.sanu.ac.rs}


\keywords{Continuous convex functions; pre-Hermite-Hadamard
inequalities; generalization of Hermite-Hadamard inequality. }
\subjclass[2010]{39B62(26D15)}

\newtheorem{lemma}[equation]{Lemma}

\newtheorem{corollary}[equation]{Corollary}
\newtheorem{remark}[equation]{Remark}

\newcommand{\beq}{\begin{equation}}
\newcommand{\eeq}{\end{equation}}


\numberwithin{equation}{section}

\begin{document}

\def\thefootnote{}
\footnotetext{ \texttt{\tiny File:~\jobname .tex,
          printed: \number\year-\number\month-\number\day,
          \thehours.\ifnum\theminutes<10{0}\fi\theminutes}
} \makeatletter\def\thefootnote{\@arabic\c@footnote}\makeatother

\maketitle

\begin{abstract}
In this article we proved an interesting property of the class of
continuous convex functions. This leads to the form of
pre-Hermite-Hadamard inequality which in turn admits a
generalization of the famous Hermite-Hadamard inequality. Some
further discussion is also given.
\end{abstract}


\section{Introduction}

\vspace{0.5cm}

Most general class of convex functions is defined by the
inequality

\begin{equation}\label{1}
\frac{\phi(x)+\phi(y)}{2}\ge \phi(\frac{x+y}{2}).
\end{equation}

\vspace{0.5cm}

A function which satisfies this inequality in a certain closed
interval $I$ is called {\it convex} in that interval.
Geometrically it means that the midpoint of any chord of the curve
$y=\phi(x)$ lies above or on the curve.

\vspace{0.5cm}

Denote now by $Q$ the family of {\it weights} i.e., non-negative
real numbers summing to $1$. If $\phi$ is continuous, then the
inequality

\begin{equation}\label{2}
p\phi(x)+q\phi(y)\ge\phi(px+qy)
\end{equation}

holds for any $p,q\in Q$. Moreover, the equality sign takes place
only if $x=y$ or $\phi$ is linear (cf. \cite{hlp}).

\vspace{0.5cm}

The same is valid for so-called {\it Jensen functional}, defined
as

$$
\mathcal{J}_\phi(\bold{p},\bold{x}):=\sum p_i\phi(x_i)-\phi(\sum
p_ix_i),
$$

where $\bold{p}=\{p_i\}_1^n \in Q, \bold{x}=\{x_i\}_1^n\in I, n\ge
2$.

 \vspace{0.5cm}

 Geometrically, the inequality \eqref{2} asserts that each chord
 of the curve $y=\phi(x)$ lies above or on the curve.

 \vspace{0.5cm}

\section{Results and proofs}

\vspace{0.5cm}

Main contribution of this paper is the following

 \vspace{0.5cm}

{\bf Proposition X} {\it Let $f(\cdot)$ be a continuous convex
function defined on a closed interval $[a, b]:=I$. Denote
$$
F(s,t):=f(s)+f(t)-2f({s+t\over 2}).
$$

\vspace{0.5cm}

Prove that}
$$
\max_{s, t\in I} F(s,t)=F(a, b).\eqno (1)
$$

\vspace{0.5cm}

\begin{proof}

It suffices to prove that the inequality
$$
F(s,t)\le F(a, b)
$$
holds for $a<s<t<b$.

\vspace{0.5cm}

 In the sequel we need the following
assertion (which is of independent interest).

\vspace{0.5cm}

 \begin{lemma}  Let $f(\cdot)$ be a continuous convex
function on some interval $I\subseteq \mathbb R$. If $x_1, x_2,
x_3\in I$ and $x_1<x_2<x_3$, then

\vspace{0.5cm}

$$
(i) \ \ \ {f(x_2)-f(x_1)\over 2}\le f({x_2+x_3\over
2})-f({x_1+x_3\over 2});
$$
$$
(ii) \ \ \ {f(x_3)-f(x_2)\over 2}\ge f({x_1+x_3\over
2})-f({x_1+x_2\over 2}).
$$
\end{lemma}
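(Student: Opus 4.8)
The plan is to recast both inequalities as instances of the monotonicity of difference quotients of a convex function, and then to verify that the relevant endpoints are correctly ordered. Recall first that, since $f$ is continuous and midpoint-convex, it satisfies the full convexity inequality \eqref{2}; from this the standard three-chord (``slope'') lemma follows: for any $u<v<w$ in $I$,
$$
\frac{f(v)-f(u)}{v-u}\le\frac{f(w)-f(u)}{w-u}\le\frac{f(w)-f(v)}{w-v}.
$$
Iterating this, one obtains the two-interval monotonicity I shall use throughout: writing $S(u,v):=\frac{f(v)-f(u)}{v-u}$ for $u<v$, if $a<b$, $c<d$, $a\le c$ and $b\le d$, then $S(a,b)\le S(c,d)$ (slide the left endpoint to the right first, then the right endpoint, splitting into the overlapping and disjoint configurations as needed).

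For part $(i)$ I would observe that the two points on the right-hand side, $\frac{x_1+x_3}{2}$ and $\frac{x_2+x_3}{2}$, differ by exactly $\frac{x_2-x_1}{2}$, i.e.\ half the length of $[x_1,x_2]$. Hence multiplying the slope inequality
$$
S(x_1,x_2)\le S\Big(\tfrac{x_1+x_3}{2},\tfrac{x_2+x_3}{2}\Big)
$$
by $\frac{x_2-x_1}{2}$ turns its left side into $\frac{f(x_2)-f(x_1)}{2}$ and its right side into $f(\tfrac{x_2+x_3}{2})-f(\tfrac{x_1+x_3}{2})$, which is precisely $(i)$. It then only remains to justify the slope inequality itself: the left endpoints satisfy $x_1<\frac{x_1+x_3}{2}$ (since $x_1<x_3$) and the right endpoints satisfy $x_2<\frac{x_2+x_3}{2}$ (since $x_2<x_3$), so the hypotheses $a\le c$, $b\le d$ hold and the two-interval monotonicity applies. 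Part $(ii)$ is entirely parallel: the points $\frac{x_1+x_2}{2}$ and $\frac{x_1+x_3}{2}$ again differ by $\frac{x_3-x_2}{2}$, half the length of $[x_2,x_3]$, so multiplying
$$
S\Big(\tfrac{x_1+x_2}{2},\tfrac{x_1+x_3}{2}\Big)\le S(x_2,x_3)
$$
by $\frac{x_3-x_2}{2}$ yields exactly $(ii)$; here the ordering to check is that the window $[\frac{x_1+x_2}{2},\frac{x_1+x_3}{2}]$ lies to the left of $[x_2,x_3]$, which follows from $\frac{x_1+x_2}{2}<x_2$ and $\frac{x_1+x_3}{2}<x_3$ (both because $x_1<x_2<x_3$).

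The computations are routine; the only real content is the two-interval slope monotonicity, together with the bookkeeping that the two half-length windows on the right are genuinely shifted copies (to the right in $(i)$, to the left in $(ii)$) of the full-length intervals on the left. The main point to be careful about is that, depending on the size of $x_3-x_2$ relative to $x_2-x_1$, the two comparison intervals may either overlap or be disjoint; the monotonicity argument must therefore be phrased so as to cover both configurations, which is why I state it through the general claim $a\le c,\ b\le d\Rightarrow S(a,b)\le S(c,d)$ rather than through a single three-point chord inequality.

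As an alternative that avoids the explicit slope language, one can combine midpoint convexity with the monotonicity of a sliding increment. For $(i)$, the map $t\mapsto f(\tfrac{x_2+t}{2})-f(\tfrac{x_1+t}{2})$ is nondecreasing, so its value at $t=x_2$ is at most its value at $t=x_3$, giving $f(x_2)-f(\tfrac{x_1+x_2}{2})\le f(\tfrac{x_2+x_3}{2})-f(\tfrac{x_1+x_3}{2})$; since $f(\tfrac{x_1+x_2}{2})\le\frac{f(x_1)+f(x_2)}{2}$ by \eqref{1}, the left side is bounded below by $\frac{f(x_2)-f(x_1)}{2}$, which proves $(i)$. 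Symmetrically, for $(ii)$ the map $t\mapsto f(\tfrac{t+x_3}{2})-f(\tfrac{t+x_2}{2})$ is nondecreasing, so its value at $t=x_1$ is at most its value at $t=x_2$, namely $f(\tfrac{x_2+x_3}{2})-f(x_2)$, and midpoint convexity bounds this last quantity above by $\frac{f(x_3)-f(x_2)}{2}$.
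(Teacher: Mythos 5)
Your proof is correct, but it takes a genuinely different route from the paper's. The paper proves part (i) directly from the two-weight convexity inequality \eqref{2}: it writes $x_2$ as a convex combination $x_2=px_1+q\,\frac{x_2+x_3}{2}$ and applies \eqref{2} twice in one chain of inequalities, ending with the algebraic identity $\frac{q}{2}x_1+\frac{2-q}{2}\cdot\frac{x_2+x_3}{2}=\frac{x_1+x_3}{2}$; part (ii) is then handled symmetrically via $x_2=p\,\frac{x_1+x_2}{2}+qx_3$. You instead reduce both parts to monotonicity of difference quotients: (i) becomes the statement that the slope over the half-length, right-shifted window $\left[\frac{x_1+x_3}{2},\frac{x_2+x_3}{2}\right]$ dominates the slope over $[x_1,x_2]$, and (ii) is the mirror statement, with the factor $\frac{1}{2}$ in the lemma exactly absorbed by the halved window lengths. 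Your version buys conceptual clarity: both inequalities are instances of a single monotonicity principle, the symmetry between (i) and (ii) is visible, and your explicit attention to the overlapping versus disjoint configurations of the two comparison intervals supplies precisely the case analysis such an argument requires (in the application both configurations really can occur, since $x_2$ may lie on either side of $\frac{x_1+x_3}{2}$). The paper's version buys self-containedness: it invokes nothing beyond \eqref{2} itself, at the cost of a somewhat opaque choice of convex combination and a small closing computation. Your alternative argument via the nondecreasing increment $t\mapsto f(\frac{x_2+t}{2})-f(\frac{x_1+t}{2})$ combined with midpoint convexity is also sound, but note that the increment monotonicity is just the equal-length special case of the same slope monotonicity (Wright convexity), so it is a repackaging of your first proof rather than a third method.
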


\vspace{0.5cm}

{\bf Proof}

\vspace{0.5cm}

We shall prove the first part of the lemma; proof of the second
part goes along the same lines.

\vspace{0.5cm}

 Since  $x_1<x_2<{x_2+x_3\over 2}< x_3$, there exist
$p, q; \ 0\le p, q\le 1, p+q=1$ such that $x_2=px_1+q{x_2+x_3\over
2}$.

\vspace{0.5cm}

 Hence,
$$
{f(x_1)-f(x_2)\over 2} + f({x_2+x_3\over 2})={1\over
2}[f(x_1)-f(px_1+q{x_2+x_3\over 2})] + f({x_2+x_3\over 2})
$$
$$
\ge{1\over 2}[f(x_1)-(pf(x_1)+qf({x_2+x_3\over 2}))]+
f({x_2+x_3\over 2}) ={q\over 2} f(x_1)+{2-q\over 2}
f({x_2+x_3\over 2})
$$
$$
\ge f({q\over 2} x_1+{2-q\over 2}({x_2+x_3\over 2}))=f({q\over 2}
x_1+({x_2+x_3\over 2})-{1\over 2}(x_2-px_1))=f({x_1+x_3\over 2}).
$$

\vspace{0.5cm}

For the proof of second part we can take $x_2=p({x_1+x_2\over
2})+qx_3$ and proceed as above.

\vspace{0.5cm}

 Now, applying the part (i) with $x_1=a, x_2=s,
x_3=b$ and the part (ii) with $x_1=s, x_2=t, x_3=b$, we get
$$
{f(s)-f(a)\over 2}\le f({s+b\over 2})-f({a+b\over 2}); \eqno (2)
$$
$$
{f(b)-f(t)\over 2}\ge f({s+b\over 2})-f({s+t\over 2}), \eqno (3)
$$
respectively.

\vspace{0.5cm}

 Subtracting (2) from (3), the desired inequality follows.
\end{proof}

\vspace{0.5cm}

\begin{remark} A challenging task is to find a geometric proof of
the property $(1)$.
\end{remark}

\vspace{0.5cm}

We shall quote now a couple of important consequences. The first
one is used in a number of articles although we never saw a proof
of it.

\vspace{0.5cm}

\begin{corollary} Let $f$ be defined as above. If $x,y\in [a,b]$ and $x+y=a+b$, then

$$
f(x)+f(y)\le f(a)+f(b).
$$

\end{corollary}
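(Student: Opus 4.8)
The plan is to read this corollary off directly from Proposition~X. The decisive observation is that the hypothesis $x+y=a+b$ forces the two midpoints to coincide, $\frac{x+y}{2}=\frac{a+b}{2}$, so that the subtracted term in the definition of $F$ is literally the same for the pair $(x,y)$ as for the pair $(a,b)$, namely $2f\!\left(\frac{a+b}{2}\right)$.

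Concretely, I would invoke Proposition~X with $s=x$ and $t=y$ (both of which lie in $[a,b]$), which yields $F(x,y)\le F(a,b)$, that is,
\[
f(x)+f(y)-2f\!\left(\frac{x+y}{2}\right)\le f(a)+f(b)-2f\!\left(\frac{a+b}{2}\right).
\]
I would then substitute $\frac{x+y}{2}=\frac{a+b}{2}$ on the left-hand side, so that the midpoint terms on the two sides cancel, leaving exactly $f(x)+f(y)\le f(a)+f(b)$. Since Proposition~X already covers the endpoint configurations, no separate treatment of the boundary cases $x=a$ or $x=b$ is needed.

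There is essentially no genuine obstacle here; the content of the corollary is entirely absorbed into Proposition~X, and the only thing to verify is the elementary cancellation of the midpoint terms afforded by the constraint $x+y=a+b$. For completeness I would note an independent route that avoids Proposition~X altogether: writing $x=\lambda a+(1-\lambda)b$ with $\lambda=\frac{b-x}{b-a}\in[0,1]$ automatically gives $y=a+b-x=(1-\lambda)a+\lambda b$, and applying the chord inequality~\eqref{2} to $x$ and to $y$ and adding produces the claim at once, because the coefficients of $f(a)$ and of $f(b)$ each sum to $1$. The Proposition~X argument is preferable in the present context, however, since it exhibits the corollary as a transparent special case of the main result.
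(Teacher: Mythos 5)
Your proof is correct and is exactly what the paper intends: the paper's own proof reads ``Obvious, as a simple application of Proposition X,'' and your argument---applying Proposition~X with $s=x$, $t=y$ and cancelling the coinciding midpoint terms $2f\bigl(\frac{x+y}{2}\bigr)=2f\bigl(\frac{a+b}{2}\bigr)$---is precisely the detail being left implicit. The alternative route you sketch via the chord inequality is also valid, but your main argument matches the paper's approach.
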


\begin{proof} Obvious, as a simple application of Proposition X.

\end{proof}

\vspace{0.5cm}

\begin{corollary}  Under the conditions of Proposition X, the double inequality
$$
2f(\frac{a+b}{2})\le f(pa +qb)+f(pb+qa)\le f(a)+f(b)\eqno(4)
$$

holds for arbitrary weights $p,q\in Q$.
\end{corollary}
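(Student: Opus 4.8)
The plan is to reduce both halves of the double inequality to facts already in hand. First I would abbreviate $x := pa+qb$ and $y := pb+qa$. Since $p,q\ge 0$ and $p+q=1$, each of $x,y$ is a convex combination of the endpoints and therefore lies in $[a,b]$; moreover
$$
x+y=(p+q)a+(p+q)b=a+b,\qquad\text{so}\qquad \frac{x+y}{2}=\frac{a+b}{2}.
$$
This pair of observations is what makes both inequalities fall out of earlier results.

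For the left-hand inequality I would simply apply the midpoint-convexity inequality \eqref{1} to the pair $x,y$ and use that their midpoint is $(a+b)/2$:
$$
2f\Big(\frac{a+b}{2}\Big)=2f\Big(\frac{x+y}{2}\Big)\le f(x)+f(y)=f(pa+qb)+f(pb+qa).
$$
No convexity beyond the defining inequality is needed here.

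For the right-hand inequality I would invoke the preceding Corollary: because $x,y\in[a,b]$ and $x+y=a+b$, it yields at once
$$
f(pa+qb)+f(pb+qa)=f(x)+f(y)\le f(a)+f(b).
$$
Chaining the two displays gives (4).

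The only point requiring genuine care is verifying that the hypotheses of the preceding Corollary really hold for this particular pair, namely that $x,y$ remain inside $[a,b]$ and sum to $a+b$; both are immediate from $p,q\ge 0$ and $p+q=1$. So I do not expect a substantive obstacle: the content of (4) is precisely plain convexity on the left and the preceding Corollary (hence Proposition X) on the right, specialized to the symmetric pair $pa+qb$, $pb+qa$.
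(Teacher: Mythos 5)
Your proof is correct and follows essentially the same route as the paper: midpoint convexity \eqref{1} applied to the symmetric pair $pa+qb$, $pb+qa$ (whose midpoint is $\frac{a+b}{2}$) gives the left-hand side, and Proposition X gives the right-hand side. The only cosmetic difference is that you route the right-hand inequality through the preceding Corollary, whereas the paper applies Proposition X directly with $s=pa+qb$, $t=pb+qa$; since that Corollary is itself just Proposition X specialized to pairs summing to $a+b$, the arguments coincide.
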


\vspace{0.5cm}

\begin{proof}

\vspace{0.5cm}

 Applying Proposition X with $s=pa+qb, t=pb+qa; s,t\in I$ we get the right-hand
side of (4). The left-hand side inequality is obvious since, by
definition,

$$
\frac{f(pa+qb)+f(pb+qa)}{2}\ge
f[\frac{(pa+qb)+(pb+qa)}{2}]=f(\frac{a+b}{2}).
$$

\end{proof}

\vspace{0.5cm}

\begin{remark} The relation (4) represents a kind of pre-Hermite-Hadamard
inequalities. Indeed, integrating both sides of (4) over $p\in
[0,1]$, we obtain the form of Hermite-Hadamard inequality (cf.
\cite{np}),

$$
f(\frac{a+b}{2})\le\frac{1}{b-a}\int_a^b
f(t)dt\le\frac{f(a)+f(b)}{2}.
$$
\end{remark}

\vspace{0.5cm}

Moreover, the inequality (4) admits a generalization of the
Hermite-Hadamard inequality.

\vspace{0.5cm}

{\bf Proposition Y} {\it Let $g$ be an arbitrary non-negative and
integrable function on $I$. Then, with $f$ defined as above, we
get

$$
2f(\frac{a+b}{2})\int_a^b g(t)dt\le\int_a^b
(g(t)+g(a+b-t))f(t)dt\le (f(a)+f(b))\int_a^b g(t)dt. \eqno(5)
$$}

\begin{proof} Multiplying both sides of $(4)$ with $g(pa+qb)$ and
integrating over $p\in [0,1]$, we obtain

$$
2f(\frac{a+b}{2})\frac{\int_a^b g(t)dt}{b-a}\le\frac{\int_a^b
(f(t)+f(a+b-t))g(t)dt}{b-a}\le (f(a)+f(b))\frac{\int_a^b
f(t)dt}{b-a},
$$

and, because

$$
\int_a^b (f(t)+f(a+b-t))g(t)dt=\int_a^b (g(t)+g(a+b-t))f(t)dt,
$$

the inequality $(5)$ follows.
\end{proof}

\vspace{0.5cm}

We shall give in the sequel some illustrations of this
proposition.

\vspace{0.5cm}

\begin{corollary} For any $f$ that is convex and continuous on $I:=[a,b], 0<a<b$ and $\alpha\in\mathbb
R/\{0\}$, we have

$$
2f(\frac{a+b}{2})\le\frac{\alpha}{b^\alpha-a^\alpha}\int_a^b
[t^{\alpha-1}+(a+b-t)^{\alpha-1}]f(t)dt\le f(a)+f(b).
$$
\end{corollary}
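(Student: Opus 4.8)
The plan is to obtain this corollary as a direct specialization of Proposition Y, choosing the weight function $g(t) := t^{\alpha-1}$ on $I = [a,b]$. First I would verify that this $g$ meets the hypotheses of Proposition Y. Since $0 < a \le t \le b$ throughout $I$, the power $t^{\alpha-1}$ is well defined, strictly positive, and continuous on $[a,b]$ for every real $\alpha$ (the singularity of $t \mapsto t^{\alpha-1}$ occurs only at $t = 0$, which is excluded by $a > 0$), regardless of the sign of $\alpha - 1$. Hence $g$ is non-negative and integrable, exactly as required.

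Next I would compute the three quantities entering inequality $(5)$ for this choice of $g$. The reflected weight is $g(a+b-t) = (a+b-t)^{\alpha-1}$, so that $g(t) + g(a+b-t) = t^{\alpha-1} + (a+b-t)^{\alpha-1}$, which is precisely the bracketed factor in the statement. The normalizing integral is $\int_a^b g(t)\,dt = \int_a^b t^{\alpha-1}\,dt = (b^\alpha - a^\alpha)/\alpha$; here the hypothesis $\alpha \ne 0$ is exactly what guarantees the validity of this antiderivative, and $b^\alpha - a^\alpha \ne 0$ because $a \ne b$.

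Finally, I would substitute these expressions into $(5)$ and divide the entire chain of inequalities by the constant $\int_a^b g(t)\,dt = (b^\alpha - a^\alpha)/\alpha$. This division turns the two outer prefactors back into $2f((a+b)/2)$ and $f(a) + f(b)$, while the middle term picks up the factor $\alpha/(b^\alpha - a^\alpha)$, producing the asserted double inequality verbatim. The argument is thus essentially a substitution; the only point deserving a moment's care is that the division must not reverse the inequalities, i.e. one must confirm that $(b^\alpha - a^\alpha)/\alpha > 0$ for every $\alpha \ne 0$ when $0 < a < b$. This holds because $g(t) = t^{\alpha-1} > 0$ on $[a,b]$ forces its integral to be positive; alternatively, for $\alpha > 0$ we have $b^\alpha > a^\alpha$, and for $\alpha < 0$ both numerator and denominator are negative, so the quotient is positive in either case.
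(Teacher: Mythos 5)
Your proof is correct and is exactly the argument the paper intends: the corollary is stated as an immediate illustration of Proposition Y with $g(t)=t^{\alpha-1}$, using $\int_a^b t^{\alpha-1}\,dt=(b^\alpha-a^\alpha)/\alpha$ for $\alpha\ne 0$ and dividing (5) by this positive constant. Your added checks (positivity of $g$ on $[a,b]$ thanks to $a>0$, and positivity of $(b^\alpha-a^\alpha)/\alpha$ in both sign cases of $\alpha$) are precisely the details the paper leaves tacit.
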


\vspace{0.5cm}

Also, for $\alpha\to 0$, we get

\begin{corollary}

$$
2f(\frac{a+b}{2})\frac{\log(b/a)}{a+b}\le\int_a^b
\frac{f(t)}{t(a+b-t)}dt\le [f(a)+f(b)]\frac{\log(b/a)}{a+b}.
$$

\end{corollary}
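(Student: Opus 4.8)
The plan is to obtain this corollary as the limiting case $\alpha\to 0$ of the preceding corollary, which in turn is just Proposition Y specialized to the weight $g(t)=t^{\alpha-1}$. With that choice one has $\int_a^b g(t)\,dt=(b^\alpha-a^\alpha)/\alpha$ and $g(a+b-t)=(a+b-t)^{\alpha-1}$, so dividing the double inequality $(5)$ through by $\int_a^b g(t)\,dt$ reproduces the previous corollary. Since the final statement is precisely the formal value of that inequality ``at $\alpha=0$'', the $\alpha\neq 0$ restriction there being filled in by continuity, the whole task reduces to letting $\alpha\to 0$ and verifying that every piece converges to the asserted limit.

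First I would record the two elementary asymptotics. Writing $b^\alpha-a^\alpha=e^{\alpha\log b}-e^{\alpha\log a}=\alpha\log(b/a)+O(\alpha^2)$ gives
$$\frac{\alpha}{b^\alpha-a^\alpha}\xrightarrow[\alpha\to 0]{}\frac{1}{\log(b/a)}.$$
Next, for each fixed $t\in(a,b)$ one has $t^{\alpha-1}=t^{-1}t^\alpha\to t^{-1}$ and likewise $(a+b-t)^{\alpha-1}\to(a+b-t)^{-1}$, whence
$$t^{\alpha-1}+(a+b-t)^{\alpha-1}\xrightarrow[\alpha\to 0]{}\frac1t+\frac1{a+b-t}=\frac{a+b}{t(a+b-t)}.$$
Combining the scalar prefactor with the integral, the central expression of the previous corollary tends to
$$\frac{1}{\log(b/a)}\int_a^b\frac{a+b}{t(a+b-t)}\,f(t)\,dt=\frac{a+b}{\log(b/a)}\int_a^b\frac{f(t)}{t(a+b-t)}\,dt,$$
and a final multiplication by $\log(b/a)/(a+b)$ turns the flanking bounds $2f((a+b)/2)$ and $f(a)+f(b)$ into exactly the quantities displayed in the statement.

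The only genuine point to justify --- and the one I would treat most carefully --- is the passage of the limit under the integral sign. Here the hypothesis $0<a<b$ is essential: on the compact interval $[a,b]$ the map $t\mapsto t^\alpha=e^{\alpha\log t}$ converges to $1$ uniformly as $\alpha\to 0$, since $\log t$ is bounded there, and for $|\alpha|\le\tfrac12$ one has the uniform bound $t^{\alpha-1}\le C/a$ with $C=\max(1,a^{-1/2},b^{1/2})$, and similarly for the reflected term; note that $a+b-t$ also ranges over $[a,b]$, so no singularity can arise. Consequently the integrand $[t^{\alpha-1}+(a+b-t)^{\alpha-1}]f(t)$ is dominated, uniformly in $\alpha$, by the integrable constant $(2C/a)\sup_{[a,b]}|f|$, the supremum being finite because $f$ is continuous on a compact set. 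Dominated convergence (or simply the established uniform convergence of the integrand) then permits the interchange of limit and integral. Passing to the limit in both inequalities of the previous corollary and rearranging as above completes the argument.
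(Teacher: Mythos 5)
Your proposal is correct and takes essentially the same route as the paper, which obtains this corollary simply by letting $\alpha\to 0$ in the preceding one; your careful justification of the limit interchange (uniform domination of $t^{\alpha-1}+(a+b-t)^{\alpha-1}$ on $[a,b]$, which uses $0<a$) supplies the rigor the paper leaves implicit. Note only that the limit can be avoided altogether: applying Proposition Y directly with $g(t)=1/t$ gives $g(t)+g(a+b-t)=(a+b)/\bigl(t(a+b-t)\bigr)$ and $\int_a^b g(t)\,dt=\log(b/a)$, which yields the statement at once.
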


\vspace{0.5cm}

Similarly,

\begin{corollary}

$$
2f(\frac{\pi}{2})\le\int_0^\pi f(t)\sin t dt\le f(0)+f(\pi);
$$

$$
2f(\frac{\pi}{4})\le\int_0^{\pi/2} [\sin t+\cos t]f(t) dt\le
f(0)+f(\pi/2).
$$
\end{corollary}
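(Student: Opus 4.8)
The plan is to obtain both double inequalities as direct specializations of Proposition Y (inequality (5)), choosing in each case the interval $I$, the endpoints $a,b$, and the non-negative weight $g$ so that the symmetrized kernel $g(t)+g(a+b-t)$ reproduces the integrand in the claim and so that the normalizing factor $\int_a^b g(t)\,dt$ evaluates to a convenient constant that cancels.

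For the first inequality I would take $I=[0,\pi]$, hence $a=0$, $b=\pi$, $a+b=\pi$, and choose $g(t)=\sin t$, which is non-negative and integrable on $[0,\pi]$. Since $g(a+b-t)=\sin(\pi-t)=\sin t$, the kernel becomes $g(t)+g(a+b-t)=2\sin t$. Substituting into (5) and using $\int_0^\pi\sin t\,dt=2$, the common factor $2$ cancels throughout, yielding
$$
2f(\tfrac{\pi}{2})\le\int_0^\pi f(t)\sin t\,dt\le f(0)+f(\pi).
$$

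For the second inequality I would take $I=[0,\pi/2]$, so $a=0$, $b=\pi/2$, $a+b=\pi/2$, and again set $g(t)=\sin t$. The key observation is that now $g(a+b-t)=\sin(\tfrac{\pi}{2}-t)=\cos t$, so the symmetrized kernel is exactly $g(t)+g(a+b-t)=\sin t+\cos t$, matching the integrand in the claim. Using $\int_0^{\pi/2}\sin t\,dt=1$, inequality (5) collapses at once to
$$
2f(\tfrac{\pi}{4})\le\int_0^{\pi/2}[\sin t+\cos t]f(t)\,dt\le f(0)+f(\pi/2).
$$

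There is no genuine analytic obstacle here, since both statements are pure substitutions into (5); the only point requiring care is the matching step, namely verifying that the reflected weight $g(a+b-t)$ combines with $g(t)$ to give precisely the kernel appearing in the corollary. This rests on the symmetry of $\sin$ about $\pi/2$ in the first case (which doubles $\sin t$) and on the complementary-angle identity turning $\sin$ into $\cos$ in the second. Once the weight is correctly chosen, evaluating the positive constant $\int_a^b g$ and cancelling it finishes the argument.
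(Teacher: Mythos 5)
Your proposal is correct and is exactly the argument the paper intends: the corollary is presented as a direct illustration of Proposition Y, and choosing $g(t)=\sin t$ on $[0,\pi]$ (where the reflected kernel doubles to $2\sin t$ and $\int_0^\pi \sin t\,dt=2$ cancels) and on $[0,\pi/2]$ (where $\sin(\tfrac{\pi}{2}-t)=\cos t$ gives the kernel $\sin t+\cos t$ with $\int_0^{\pi/2}\sin t\,dt=1$) is precisely the intended specialization. Both substitutions check out, so there is nothing to add.
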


\vspace{0.5cm}

Estimations of the convolution of symmetric kernel on a symmetric
interval are also of interest.

\vspace{0.5cm}

\begin{corollary} Let $f$ and $g$ be defined as above on a
symmetric interval $[-a,a], a>0$. Then we have that

$$
2f(0)\int_{-a}^a g(t) dt\le\int_{-a}^a [g(-t)+g(t)]f(t)dt\le
[f(-a)+f(a)]\int_{-a}^a g(t)dt.
$$

\end{corollary}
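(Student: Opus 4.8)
The plan is to recognize this corollary as the direct specialization of Proposition Y to the symmetric interval, so that no new argument is required beyond a substitution. In Proposition Y the relevant interval is $[a,b]$, with midpoint $(a+b)/2$ and reflection map $t\mapsto a+b-t$. Here I would apply that proposition with the left endpoint played by $-a$ and the right endpoint played by $a$; the only mild care needed is that the symbol $a$ is being reused, so I would keep in mind that ``$a$'' and ``$b$'' in (5) are to be read as $-a$ and $a$ in the present setting.

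First I would record the three elementary consequences of symmetry about the origin. The midpoint becomes $\tfrac{(-a)+a}{2}=0$, so $f(\tfrac{a+b}{2})=f(0)$. The reflection becomes $a+b-t=(-a)+a-t=-t$, so the weight $g(a+b-t)$ appearing in the middle integrand of (5) turns into $g(-t)$, giving $g(t)+g(a+b-t)=g(t)+g(-t)$. Finally the boundary term $f(a)+f(b)$ becomes $f(-a)+f(a)$.

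Substituting these three identities into the double inequality (5) yields
$$
2f(0)\int_{-a}^{a} g(t)\,dt\le\int_{-a}^{a}\bigl[g(-t)+g(t)\bigr]f(t)\,dt\le\bigl[f(-a)+f(a)\bigr]\int_{-a}^{a} g(t)\,dt,
$$
which is exactly the claimed inequality. Since $f$ is assumed convex and continuous on $[-a,a]$ and $g$ is non-negative and integrable there, the hypotheses of Proposition Y are met verbatim, and the result follows with no further work.

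I do not anticipate a genuine obstacle, as the statement is a reformulation rather than a strengthening of Proposition Y. If anything, the only point worth stating cleanly is the notational one: confirming that the symmetry of $[-a,a]$ about the origin is precisely what collapses the general reflection $t\mapsto a+b-t$ into the even-reflection $t\mapsto -t$, so that the symmetrized kernel $g(t)+g(-t)$ arises automatically from (5).
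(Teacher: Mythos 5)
Your proof is correct and matches the paper's intent exactly: the paper states this corollary as one of several immediate illustrations of Proposition Y, with the specialization $[a,b]\to[-a,a]$ (midpoint $0$, reflection $a+b-t\to -t$, endpoints $f(-a)+f(a)$) left implicit. Your careful handling of the reused symbol $a$ and the collapse of the reflection map is precisely the substitution the paper relies on.
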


\vspace{0.5cm}

\begin{remark} There remains the question of possible extensions
of the relation $(1)$. In this sense one can try to prove, along
the lines of the proof of $(1)$, that

$$
\max_{p,q\in Q; x,y\in [a,b]}F^*(p,q;x,y)=F^*(p,q;a,b),
$$

where

$$
F^*(p,q;x,y):=pf(x)+qf(y)-f(px+qy).
$$

\vspace{0.5cm}

Anyway the result will be wrong, as simple examples show (apart
from the case $f(x)=x^2$).

On the other hand, it was proved in {\cite s} that for $p_i\in Q$
and $x_i\in [a,b]$ there exist $p,q\in Q$ such that

$$
\mathcal{J}_f(\bold{p},\bold{x})=\sum p_i f(x_i)-f(\sum p_i
x_i)\le pf(a)+qf(b)-f(pa+qb),\eqno(6)
$$

for any continuous function $f$ which is convex on $[a,b]$.

\vspace{0.5cm}

Therefore, an important conclusion follows.

\end{remark}

\vspace{0.5cm}

\begin{corollary} For arbitrary $p_i\in Q$
and $x_i\in [a,b]$, we have that

$$
\sum p_i f(x_i)-f(\sum p_i x_i)\le \max_p
[pf(a)+qf(b)-f(pa+qb)]:=T_f(a,b),
$$

where $T_f(a,b)$ is an optimal upper global bound, depending only
on $a$ and $b$ (cf. \cite{s}).

\end{corollary}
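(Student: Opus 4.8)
The plan is to read this corollary as an immediate consequence of the estimate $(6)$, upgraded from a pointwise bound to a uniform one by a compactness argument. First I would fix arbitrary weights $p_i\in Q$ and nodes $x_i\in[a,b]$ and invoke $(6)$: there exist $p,q\in Q$, possibly depending on the chosen data $\mathbf{p},\mathbf{x}$, for which
$$
\sum p_i f(x_i)-f\Big(\sum p_i x_i\Big)\le pf(a)+qf(b)-f(pa+qb).
$$
The right-hand side is just one value of the two-point Jensen functional $\Phi(p):=pf(a)+(1-p)f(b)-f(pa+(1-p)b)$, and hence is certainly dominated by $\sup_{p\in[0,1]}\Phi(p)$. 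Chaining the two estimates yields the asserted inequality, with $T_f(a,b)$ standing for that supremum; the point is that the data-dependent pair $(p,q)$ supplied by $(6)$ gets absorbed once we pass to the supremum over all admissible weights.

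Next I would justify replacing the supremum by a genuine maximum and record that the resulting quantity depends only on $a,b$ (and $f$). Since $f$ is continuous on $[a,b]$, the map $p\mapsto\Phi(p)$ is continuous on the compact interval $[0,1]$ and therefore attains its largest value; this legitimizes the definition $T_f(a,b):=\max_{p\in[0,1]}\Phi(p)$. Crucially, $\Phi$ involves only the endpoints $a,b$, the weight $p$, and the function $f$, so $T_f(a,b)$ retains no trace of the particular $\mathbf{p}$ or $\mathbf{x}$ chosen at the outset: it is a global constant attached to the triple $(f,a,b)$, exactly as claimed.

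The only point deserving separate care — and thus the closest thing to an obstacle — is the optimality assertion, i.e. that $T_f(a,b)$ cannot be replaced by any smaller constant. Here I would exhibit the extremal configuration directly: take $n=2$, nodes $x_1=a,\ x_2=b$, and weights $(p_1,p_2)=(p^\ast,1-p^\ast)$ where $p^\ast$ realizes the maximum defining $T_f(a,b)$. For this data the left-hand side of the corollary equals precisely $\Phi(p^\ast)=T_f(a,b)$, so the bound is achieved. Consequently no constant smaller than $T_f(a,b)$ can serve as a universal upper bound for $\mathcal{J}_f$ over all admissible data in $[a,b]$, which is the sharpness we want. Beyond these routine verifications (continuity plus compactness for well-posedness, and the two-point configuration for sharpness) there is no genuine difficulty, since all the analytic content is already packed into $(6)$.
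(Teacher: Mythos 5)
Your proposal is correct and follows essentially the same route as the paper, which presents this corollary as an immediate consequence of the bound $(6)$ from \cite{s}: one simply absorbs the data-dependent pair $(p,q)$ supplied by $(6)$ into the maximum over $p$. Your added verifications --- continuity of $p\mapsto pf(a)+(1-p)f(b)-f(pa+(1-p)b)$ on $[0,1]$ to ensure the maximum is attained, and the two-point configuration $x_1=a,\ x_2=b$ with the extremal weight to establish sharpness --- are correct routine details that the paper leaves implicit.
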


\vspace{0.5cm}

An answer to the above remark is given by the next

\vspace{0.5cm}

{\bf Proposition Z} {\it If $f$ is continuous and convex on
$[a,b]$, then

$$
\max_{p,q\in Q; x,y\in [a,b]}F^*(p,q;x,y)\le F(a,b).
$$}

\begin{proof} We shall prove just that

$$
F^*(p,q;x,y)\le F(x,y),
$$

for all $p,q\in Q$ and $x,y\in [a,b]$.

\vspace{0.5cm}

Indeed,

$$
F(x,y)-F^*(p,q;x,y)=qf(x)+pf(y)+f(px+qy)-2f(\frac{x+y}{2})
$$
$$
 \ge f(qx+py)+f(px+qy)-2f(\frac{x+y}{2})\ge
2f(\frac{(qx+py)+(px+qy)}{2})-2f(\frac{x+y}{2})=0.
$$

\vspace{0.5cm}

The rest of the proof is an application of Proposition X.
\end{proof}

\vspace{0.5cm}

Putting there $x=a, y=b$ and combining with $(6)$, we obtain
another global bound for Jensen functional.

\vspace{0.5cm}

\begin{corollary} We have that

$$
\mathcal{J}_f(\bold{p},\bold{x})\le
f(a)+f(b)-2f(\frac{a+b}{2}):=T'_f(a,b).
$$

\end{corollary}

\vspace{0.5cm}

The bound $T'_f(a,b)$ is not so precise as $T_f(a,b)$ but is much
easier to calculate.

\vspace{0.5cm}


\begin{thebibliography}{HIMPS}
\footnotesize


\bibitem [HLP]{hlp}{\sc G.H. Hardy, J.E. Littlewood and G. Polya}, {\em
    Inequalities}, Cambridge University Press, Cambridge, 1978.



\bibitem[NP]{np} {\sc
C. P. Nikulesku and L. E. Persson,} Old and new on the
Hermite-Hadamard inequality, Real Analysis Exchange, Vol. 29(2)
(2003/4) pp. 663-685.


  \bibitem[S] {s}{\sc S. Simi\'c},
{Best possible global bounds for Jensen functional}, Proceedings
of the American Mathematical Society {\bf 138}, No. 7 (2010) pp.
2457-1462.
\end{thebibliography}
\end{document}